\newtheorem{thm}{Theorem}[section]
\theoremstyle{definition}
\newtheorem{cor}[thm]{Corollary}
\newtheorem{prop}[thm]{Proposition}
\newtheorem{defn}[thm]{Definition}
\newtheorem{rem}[thm]{Remark}
\newtheorem{ex}[thm]{Example}
\numberwithin{equation}{section}
\begin{document}
\title[$\psi $-second submodules of a module]{$\psi $-second submodules of a module}

\author{F. Farshadifar*}
\address{\llap{*\,} (Corresponding Author) Assistant Professor, Department of Mathematics, Farhangian University, Tehran, Iran.}
\email{f.farshadifar@cfu.ac.ir}

\author%
{H. Ansari-Toroghy**}

\newcommand{\acr}{\newline\indent}

\address{\llap{**\,}Department of pure Mathematics\\
Faculty of mathematical
Sciences\\
University of Guilan\\
P. O. Box 41335-19141, Rasht, Iran}
\email{ansari@guilan.ac.ir}

\begin{abstract}
Let  $R$ be a commutative ring with identity and $M$ be an $R$-module. Let $\psi : S(M)\rightarrow S(M) \cup \{\emptyset \}$  be a function, where $S(M)$ denote the set of all submodules of $M$.
The main purpose of this paper is to introduce and study the notion of $\psi$-second submodules of an $R$-module $M$.
\end{abstract}

\subjclass[2010]{13C05}%
\keywords {Second submodule, $\phi$-prime ideal, weak second submodule, $\psi $-second submodule}

\maketitle
% ----------------------------------------------------------------
\section{Introduction}
\noindent
Throughout this paper, $R$ will denote a commutative ring with
identity and $\Bbb Z$ will denote the ring of integers. We will denote the set of ideals of $R$ by $S(R)$ and the set of all submodules of $M$ by $S(M)$, where $M$ is an $R$-module.

Let $M$ be an $R$-module. A proper submodule $P$ of $M$ is said to be \emph{prime} if for any $r \in R$ and $m \in M$ with $rm \in P$, we have $m \in P$ or $r \in (P:_RM)$ \cite{Da78}. A non-zero submodule $N$ of $M$ is said to be \emph{second} if for each $a \in R$, the homomorphism $ N \stackrel {a} \rightarrow N$ is either surjective
or zero \cite{Y01}.

A non-zero submodule $S$ of an $R$-module $M$ is a \emph{weak second submodule} of $M$   if for each $r \in R$ and a submodule $K$ of $M$, $r \in (K:_RS) \setminus (K:_RM)$ implies that $S \subseteq K$ or $r \in Ann_R(S)$ \cite{FA18}.

Anderson and Bataineh in \cite{AB08} defined the notation of $\phi$-prime ideals as follows: let
$\phi: S(R)\rightarrow S(R) \cup \{\emptyset \}$ be a function. Then, a proper ideal $P$ of $R$ is \textit{$\phi$-prime} if for r$, s \in R$, $rs \in P \setminus \phi(P)$ implies that $r \in P$ or $s \in  P$.

Zamani in \cite{Za10} extended this concept to prime submodule. For a function $\phi : S(M)\rightarrow S(M) \cup \{\emptyset \}$, a proper submodule $N$ of $M$ is called \textit{$\phi$-prime} if whenever $r \in R$ and $x \in M$
with $rx \in N \setminus \phi(N)$, then $r \in (N :_R M)$ or $x \in N$.

Let $M$ be an $R$-module  and let $\psi: S(M) \rightarrow S(M) \cup \{\emptyset \}$ be a function.
The main purpose of this paper is to introduce and study the notion of $\psi$-second submodules of $M$ as a dual notion of  $\phi$-prime submodules of $M$.  We say that a non-zero submodule
$N$ of $M$ is a \textit{$\psi$-second submodule of $M$} if $r \in R$, $K$ a submodule of $M$, $rN\subseteq K$,  and $r\psi(N) \not \subseteq K $, then $N \subseteq K$ or $rN=0$. Among the other
results, we have shown that  if  $N$ is a $\psi$-second submodule of $M$ such that  $Ann_R(N) \psi(N) \not\subseteq N $, then $N$ is a second submodule of $M$ (see Theorem \ref{t2.3}).
We prove that if $H$ is a proper submodule of $M$ such that $(H:_RM)=0$, then $H$ is a second submodule of M if and only if $H$ is a
$\psi_1$-second submodule of $M$ (see Corollary \ref{c2.6}). In Theorem \ref{p92.8}, it is shown that if $\psi: S(M) \rightarrow S(M) \cup \{\emptyset \}$, $\phi: S(R) \rightarrow S(R) \cup \{\emptyset \}$ are  functions, then we have the following.
\begin{itemize}
 \item [(a)] If $S$ is a $\psi$-second submodule of $M$ such that $Ann_R(\psi(S))\subseteq \phi(Ann_R(S))$, then $Ann_R(S)$ is a $\phi$-prime ideal of $R$.
 \item [(b)] If $M$ is a comultiplication $R$-module, $S$ is a submodule of $M$ such that $\psi(S)=(0:_M\phi(Ann_R(S))$,  and $Ann_R(S)$ is a $\phi$-prime ideal of $R$, then $S$ is a $\psi$-second submodule of $M$.
\end{itemize}
Also, it is shown that if $a$ is an element of $R$ such that $(0 :_M a) \subseteq  a(0:_MaAnn_R((0:_Ma)))$ and  $(0:_Ma)$ is a $\psi_1$-second submodule of $M$, then  $(0:_Ma)$ is a second submodule of $M$ (see Theorem \ref{t2.12}). Moreover,  in Theorem \ref{t2.13}, we characterize
$\psi$-second submodules of $M$.

\section{Main results}
\noindent
\begin{defn}\label{d2.1}
Let $M$ be an $R$-module, $S(M)$ be the set of all
submodules of $M$,  and let $\psi: S(M) \rightarrow S(M) \cup \{\emptyset \}$ be a function. We say that a  non-zero submodule
$N$ of $M$ is a \textit{$\psi$-second submodule of $M$} if $r \in R$, $K$ a submodule of $M$, $rN\subseteq K$,  and $r\psi(N) \not \subseteq K $, then $N \subseteq K$ or $rN=0$.
\end{defn}

We use the following functions $\psi: S(M) \rightarrow S(M) \cup \{\emptyset \}$.
$$\psi_{M}(N)=M, \qquad \forall N \in S(M),$$
$$\psi_i(N)=(N:_MAnn_R^i(N)), \ \forall N \in S(M), \ \forall i \in \Bbb N,$$
$$\psi_\sigma(N)=\sum^{\infty}_{i=1}\psi_i(N), \qquad \forall N \in S(M).$$
Then it is clear that $\psi_M$-second submodules are weak second submodules. Clearly,  for any submodule and every positive integer $n$, we have the
following implications:
$$
second \Rightarrow \psi_{n-1}-second \Rightarrow \psi_{n}-second \Rightarrow \psi_\sigma-second.
$$
For functions $\psi, \theta: S(M) \rightarrow S(M) \cup \{\emptyset \}$, we write $\psi \leq \theta $  if $\psi(N) \subseteq \theta (N)$ for each $N \in S(M)$. So whenever $\psi \leq \theta$, any $\psi$-second submodule is $\theta$-second.

\begin{thm}\label{t1.1}
\cite[2.10]{AF12}. For a submodule S of an R-module M the following statements
are equivalent.
\begin{itemize}
\item [(a)] $S$ is a second submodule of $M$.
\item [(b)] $S \not= 0$ and $rS \subseteq K$, where $r \in R$ and $K$ is a submodule of $M$, implies
either $rS = 0$ or $S \subseteq K$.
\end{itemize}
\end{thm}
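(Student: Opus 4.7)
The plan is to prove the two directions of the equivalence directly from the definition of a second submodule, namely that $S$ is nonzero and for every $a \in R$ the multiplication map $S \xrightarrow{a} S$ is either surjective or zero, i.e., $aS = S$ or $aS = 0$.

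For the direction (a) $\Rightarrow$ (b), I would assume $S$ is second and suppose $rS \subseteq K$ for some $r \in R$ and submodule $K$ of $M$. Since $S$ is second, the dichotomy gives $rS = S$ or $rS = 0$. In the first case, $S = rS \subseteq K$, and in the second case, $rS = 0$ is exactly the alternative conclusion, so (b) holds. This direction is essentially a direct unpacking of the definition.

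For (b) $\Rightarrow$ (a), the nonvanishing $S \neq 0$ is hypothesized. For any $r \in R$, the trick is to apply the hypothesis with the specific choice $K := rS$. Then $rS \subseteq K$ is trivial, so (b) forces $rS = 0$ or $S \subseteq K = rS$; in the latter case the reverse inclusion $rS \subseteq S$ always holds, yielding $rS = S$. Thus the multiplication map by $r$ on $S$ is either zero or surjective, so $S$ is second.

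I do not expect any serious obstacle: the only mildly clever step is the choice $K = rS$ in the reverse direction, which converts the inclusion-style condition (b) back into the equality-style dichotomy built into the definition of ``second.'' The whole argument should take only a few lines and makes no use of any ring- or module-theoretic hypotheses beyond what is stated.
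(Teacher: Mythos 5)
Your proof is correct. Note that the paper itself gives no argument for this statement: it is quoted from \cite[2.10]{AF12} purely as a citation, so there is no in-paper proof to compare against. Your two directions are exactly the natural ones. The direction (a) $\Rightarrow$ (b) is, as you say, just unpacking the dichotomy $rS=S$ or $rS=0$ in the definition of a second submodule, and your reverse direction is handled by the standard specialization $K:=rS$, which together with the automatic inclusion $rS\subseteq S$ upgrades $S\subseteq K$ to the equality $rS=S$; this is precisely how the result is proved in the cited source, so your write-up supplies a complete, self-contained justification of the theorem as stated, using no hypotheses beyond the definition.
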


\begin{thm}\label{t2.3}
Let $M$ be an R-module and  $\psi: S(M) \rightarrow S(M) \cup \{\emptyset \}$ be a function.
Let $N$ be a $\psi$-second submodule of $M$ such that  $Ann_R(N)\psi(N)\not\subseteq N $. Then $N$ is a second submodule of $M$.
\end{thm}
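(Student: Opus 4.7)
The plan is to reduce to Theorem~\ref{t1.1}: it suffices to verify that whenever $rN \subseteq K$ for some $r \in R$ and submodule $K$ of $M$, one has $rN = 0$ or $N \subseteq K$, since $N$ is nonzero by virtue of being $\psi$-second. I would argue by contradiction, fixing $r$ and $K$ with $rN \subseteq K$, $rN \neq 0$, and $N \not\subseteq K$, and aiming to derive an impossibility.

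First I would pass from $K$ to the smaller submodule $K \cap N$, which still contains $rN$. Applying the $\psi$-second hypothesis to the pair $(r, K \cap N)$ forces $r\psi(N) \subseteq K \cap N$: otherwise the definition would yield $N \subseteq K \cap N$ (hence $N \subseteq K$) or $rN = 0$, both contradicting the standing assumption. In particular $r\psi(N) \subseteq N$.

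The hypothesis $Ann_R(N)\psi(N) \not\subseteq N$ then enters by choosing $b \in Ann_R(N)$ with $b\psi(N) \not\subseteq N$, and reapplying $\psi$-second to the pair $(r+b,\, K \cap N)$. Since $bN = 0$, we have $(r+b)N = rN \subseteq K \cap N$; meanwhile $(r+b)\psi(N) = r\psi(N) + b\psi(N)$ cannot lie in $K \cap N$, because the first summand does but the second does not even lie in $N$. The $\psi$-second definition then yields $N \subseteq K \cap N$ or $(r+b)N = 0$, i.e.\ $N \subseteq K$ or $rN = 0$, either alternative contradicting the standing assumption.

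The main obstacle is really just spotting the right pair of substitutions. Without first replacing $K$ by $K \cap N$, the element $b\psi(N)$ might still be captured by $K$, since the hypothesis only guarantees $b\psi(N) \not\subseteq N$; so the intersection with $N$ and the shift $r \mapsto r+b$ must be carried out in tandem to force the contradiction.
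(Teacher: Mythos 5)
Your proof is correct and follows essentially the same route as the paper: the decisive move in both is to work inside $K\cap N$ and perturb $r$ by an element $b\in Ann_R(N)$ with $b\psi(N)\not\subseteq N$, so that the $\psi$-second condition applies to $(r+b, K\cap N)$. Your contradiction setup merely streamlines the paper's case analysis (and in fact renders its intermediate case $Ann_R(N)\psi(N)\not\subseteq K$ unnecessary); just note that the equality $(r+b)\psi(N)=r\psi(N)+b\psi(N)$ should be read as the elementwise identity $bx=(r+b)x-rx$, which is all the argument needs.
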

\begin{proof}
Let $a \in R$ and $K$ be a submodule of $M$ such that $aN \subseteq K$. If $a\psi(N) \not \subseteq K$, then we are done because $N$ is a $\psi$-second submodule of $M$. Thus suppose that $a\psi(N)\subseteq K$. If $a\psi(N) \not \subseteq N$, then $a\psi(N) \not \subseteq N \cap K$. Hence $aN \subseteq N \cap K$ implies that $N \subseteq N \cap K \subseteq K$ or $aN=0$ as needed. So let $a\psi(N) \subseteq N$. If $Ann_R(N)\psi(N) \not \subseteq K$, then $(a+Ann_R(N))\psi(N) \not \subseteq K$. Thus $(a+Ann_R(N))N \subseteq K$ implies that $N \subseteq K$ or $aN=(a+Ann_R(N))N=0$, as required. So let $Ann_R(N)\psi(N)\subseteq K$. Since  $Ann_R(N)\psi(N) \not\subseteq N $, there exists $ b \in Ann_R(N)$ such that $b\psi(N) \not \subseteq N$.  Hence and $b\psi(N)\not \subseteq N\cap K $. This in turn implies that $(a+b)\psi(N) \not \subseteq N \cap K$. Thus $(a+b)N\subseteq N\cap K $ implies that $N\subseteq N \cap K\subseteq K$ or $(a+b)N=aN=0$ as needed.
\end{proof}

\begin{cor}\label{c2.4}
Let $N$ be a weak second submodule of an $R$-module $M$ such that $Ann_R(N)M \not\subseteq N$.
Then $N$ is a second submodule of $M$.
\end{cor}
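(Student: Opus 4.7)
The plan is to recognize this corollary as an immediate specialization of Theorem \ref{t2.3} to the function $\psi_M$. First I would unpack the definition of a weak second submodule and observe that the condition ``$r \in (K:_R N) \setminus (K:_R M)$ implies $N \subseteq K$ or $r \in Ann_R(N)$'' is, after rewriting the containments of ideals in the usual way, literally the statement ``if $rN \subseteq K$ and $rM \not\subseteq K$, then $N \subseteq K$ or $rN = 0$''. Since $\psi_M(N)=M$, this is exactly the defining property of a $\psi_M$-second submodule. Hence the hypothesis ``$N$ is weak second'' is equivalent to ``$N$ is $\psi_M$-second'' (the paper already records one direction in the remarks following Definition \ref{d2.1}).

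Once this identification is made, I would simply apply Theorem \ref{t2.3} with $\psi=\psi_M$. The auxiliary hypothesis $Ann_R(N)\psi(N)\not\subseteq N$ of that theorem becomes $Ann_R(N)M\not\subseteq N$, which is precisely the assumption of the corollary. Theorem \ref{t2.3} then yields that $N$ is a second submodule of $M$, finishing the proof. There is no real obstacle here: the only thing to be careful about is the (essentially bookkeeping) verification that the weak second condition coincides with the $\psi_M$-second condition, after which the corollary is a one-line consequence of the preceding theorem.
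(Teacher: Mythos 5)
Your proposal is correct and is exactly the paper's argument: the paper proves the corollary by setting $\psi=\psi_M$ in Theorem \ref{t2.3}, relying on the same identification of weak second submodules with $\psi_M$-second submodules that you spell out. Your extra verification that the two definitions coincide is just a more explicit version of the remark following Definition \ref{d2.1}.
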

\begin{proof}
In the Theorem \ref{t2.3} set  $\psi =\psi_M$.
\end{proof}

\begin{cor}\label{c42.3}
Let $M$ be an R-module and  $\psi: S(M) \rightarrow S(M) \cup \{\emptyset \}$ be a function.
If $N$ is a $\psi$-second submodule of $M$ such that  $(N:_MAnn^2_R(N))\subseteq\psi(N)$, then $N$ is a $\psi_\sigma$-second submodule of $M$.
\end{cor}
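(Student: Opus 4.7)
The plan is to establish that $N$ is $\psi_2$-second and then immediately appeal to the implication ``$\psi_2$-second $\Rightarrow \psi_\sigma$-second'' listed among the general implications displayed before Theorem~\ref{t1.1}. The hypothesis $(N :_M Ann_R^2(N)) \subseteq \psi(N)$ is, by the notation introduced there, exactly the inclusion $\psi_2(N) \subseteq \psi(N)$, so it should let me convert any witness for a failure of $\psi_2$-secondness into a witness for a failure of $\psi$-secondness, and then invoke the $\psi$-secondness of $N$ directly.

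First I would take an arbitrary $r \in R$ and a submodule $K$ of $M$ with $rN \subseteq K$ and $r\psi_2(N) \not\subseteq K$. Using $\psi_2(N) \subseteq \psi(N)$, I would observe that $r\psi_2(N) \subseteq r\psi(N)$, so any element of $r\psi_2(N)$ lying outside $K$ also lies in $r\psi(N)$; hence $r\psi(N) \not\subseteq K$. The $\psi$-secondness of $N$ would then deliver $N \subseteq K$ or $rN = 0$, which is precisely what is needed for $N$ to be a $\psi_2$-second submodule.

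Finally, I would invoke the general implication ``$\psi_n$-second $\Rightarrow \psi_\sigma$-second'' with $n = 2$ to conclude that $N$ is $\psi_\sigma$-second. The substantive content lies in the first step; the appeal to the displayed chain is a one-line use of a fact already on the books. If one wished instead to verify that chain in detail, the real work would live in the inductive implication $\psi_n$-second $\Rightarrow \psi_{n+1}$-second, which would call for a cascade of perturbations of $r$ (by elements of $Ann_R(N)$) and $K$ (by replacement with $N \cap K$) in the spirit of the proof of Theorem~\ref{t2.3}, with the crucial closing observation that, in the otherwise deadlocked case, $Ann_R(N) \cdot y \subseteq \psi_n(N)$ combined with $Ann_R(N) \cdot \psi_n(N) \subseteq N$ would force $y \in \psi_2(N) \subseteq \psi_n(N)$ and thereby contradict $ry \notin K$. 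However, since the paper treats the chain as self-evident, no such analysis is part of the present plan.
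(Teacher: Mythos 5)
Your first step is sound: since the hypothesis says exactly $\psi_2(N)=(N:_M Ann_R^2(N))\subseteq\psi(N)$, any pair $(r,K)$ with $rN\subseteq K$ and $r\psi_2(N)\not\subseteq K$ also satisfies $r\psi(N)\not\subseteq K$, so $\psi$-secondness gives $N\subseteq K$ or $rN=0$; hence $N$ is $\psi_2$-second. The gap is the second step. Checking Definition \ref{d2.1}, the ``excuse'' clause is the containment $r\psi(N)\subseteq K$, which becomes harder to satisfy as $\psi(N)$ grows, so enlarging the function strengthens the notion: if $\psi\le\theta$ then every $\theta$-second submodule is $\psi$-second (an element of $r\psi(N)$ outside $K$ already lies in $r\theta(N)$), exactly the argument you used in step one. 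Since $\psi_2\le\psi_\sigma$, the implication that actually follows from this monotonicity is $\psi_\sigma$-second $\Rightarrow\psi_2$-second --- the reverse of the arrow you invoke. The displayed chain in the paper (and the sentence about $\psi\le\theta$ preceding Theorem \ref{t1.1}) has these arrows in the direction you cite, but that direction does not follow from the definition, and the paper's own proof of this corollary pointedly does not use it; if ``$\psi_2$-second $\Rightarrow\psi_\sigma$-second'' were a general fact, the corollary would be an immediate triviality. What is true in general is only that $\psi_\sigma$-secondness follows from being $\psi_n$-second for \emph{all} $n$, because $\psi_\sigma(N)=\bigcup_n\psi_n(N)$ is an increasing union; a single $n$ does not suffice. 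So your load-bearing step rests on an unproved (and, as a general implication, untenable) claim.

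The missing content is precisely what the paper's argument supplies. If $N$ is second, every $\psi_\sigma$-second condition holds trivially. Otherwise, the contrapositive of Theorem \ref{t2.3} gives $Ann_R(N)\psi(N)\subseteq N$, i.e.\ $\psi(N)\subseteq(N:_M Ann_R(N))=\psi_1(N)$; combined with your hypothesis $\psi_2(N)\subseteq\psi(N)$ and the automatic inclusion $\psi_1(N)\subseteq\psi_2(N)$, this forces $\psi(N)=\psi_1(N)=\psi_2(N)$, and then $(N:_M Ann_R^{i+1}(N))=\bigl((N:_M Ann_R^{i}(N)):_M Ann_R(N)\bigr)$ gives by induction $\psi_i(N)=\psi(N)$ for every $i$, hence $\psi_\sigma(N)=\psi(N)$. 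Since $\psi$-secondness of $N$ depends only on the value of the function at $N$, this equality is what legitimately converts $\psi$-secondness into $\psi_\sigma$-secondness. Your closing aside about perturbing $r$ by elements of $Ann_R(N)$ gestures toward this kind of argument, but it presupposes $Ann_R(N)\psi_n(N)\subseteq N$, which is not available without Theorem \ref{t2.3} and the hypothesis $\psi_2(N)\subseteq\psi(N)$; it cannot be waved off as a fact already on the books.
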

\begin{proof}
If $N$ is a second submodule of $M$, then the result is clear. So suppose that
$N$ is not a second submodule of $M$. Then by Theorem \ref{t2.3},  we have  $Ann_R(N)\psi(N) \subseteq N $.  Therefore, by assumption,
$$
(N:_MAnn^2_R(N)) \subseteq \psi(N)\subseteq (N:_MAnn_R(N)).
$$
This implies that $\psi(N)=(N:_MAnn_R^2(N))=(N:_MAnn_R(N))$ because always $(N:_MAnn_R(N)) \subseteq (N:_MAnn_R^2(N))$.
Now
$$
(N:_MAnn_R^3(N))=((N:_MAnn_R^2(N)):_MAnn_R(N))=
$$
$$
((N:_MAnn_R(N)):_MAnn_R(N))=(N:_MAnn_R^2(N))=\psi(N).
$$
By continuing, we get that $\psi(N)=(N:_MAnn_R^i(N))$ for all $i \geq 1$. Therefore, $\psi(N)=\psi_\sigma(N)$ as needed.
\end{proof}

\begin{thm}\label{t2.5}
Let $M$ be an R-module and $\psi: S(M) \rightarrow S(M) \cup \{\emptyset \}$ be a function.
Let $H$ be a submodule of $M$ such that far all ideals $I$ and $J$ of $R$, $(H:_MI)\subseteq (H:_MJ)$ implies that $J \subseteq I$. If $H$ is not a second submodule of $M$, then $H$ is not a $\psi_1$-second submodule of $M$.
\end{thm}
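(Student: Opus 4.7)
The plan is to start from a witness of non-secondness provided by Theorem~\ref{t1.1} and to show, after shrinking it, that the same witness rules out $\psi_1$-secondness. If $H=0$ there is nothing to prove, since a $\psi_1$-second submodule is non-zero by definition, so assume $H\neq 0$. By Theorem~\ref{t1.1}(b) there exist $a\in R$ and a submodule $K$ of $M$ with $aH\subseteq K$, $aH\neq 0$, and $H\not\subseteq K$. I would first replace $K$ by the (possibly smaller) submodule $aH$: the inclusion $aH\subseteq aH$ is automatic and $aH\neq 0$ still holds, while if $H\subseteq aH$ held then $aH=H$ and hence $H\subseteq K$, contradicting our choice. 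So from now on take $K=aH$ and note that $aH\subsetneq H$.

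Next, I would aim to show that this refined pair $(a,K)$ already contradicts $\psi_1$-secondness by establishing $a\psi_1(H)\not\subseteq aH$. Suppose toward contradiction that $a\psi_1(H)\subseteq aH$. An element chase gives, for each $x\in\psi_1(H)$, some $h\in H$ with $ax=ah$, hence $x-h\in(0:_Ma)$ and $x\in H+(0:_Ma)$. This yields $\psi_1(H)\subseteq H+(0:_Ma)$. Since every element of the right-hand side is sent into $H$ by multiplication by $a$, we also have $H+(0:_Ma)\subseteq (H:_M aR)$, so $(H:_M Ann_R(H))=\psi_1(H)\subseteq (H:_M aR)$.

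At this point the hypothesis applies with $I=Ann_R(H)$ and $J=aR$, forcing $aR\subseteq Ann_R(H)$, i.e., $aH=0$. This contradicts $aH\neq 0$, so $a\psi_1(H)\not\subseteq K$. Combined with $aH\subseteq K$, $aH\neq 0$, and $H\not\subseteq K$, this shows $H$ is not a $\psi_1$-second submodule of $M$.

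The main obstacle I anticipate is finding the right form in which to invoke the hypothesis. The condition on $H$ speaks only about inclusions of the shape $(H:_M I)\subseteq (H:_M J)$, whereas the negation of $\psi_1$-secondness naturally produces an inclusion where a submodule multiplied by a ring element lies inside another submodule. The bridge is the two-step inclusion $\psi_1(H)\subseteq H+(0:_Ma)\subseteq (H:_M aR)$, which converts the multiplication-by-$a$ content of $a\psi_1(H)\subseteq aH$ into a residual submodule inclusion of the required shape; recognizing that one should shrink $K$ to $aH$ in order to make the first of these inclusions work is the key step.
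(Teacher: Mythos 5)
Your proposal is correct and is essentially the paper's argument: both take the witness $(a,K)$ of non-secondness from Theorem~\ref{t1.1}, shrink the target submodule so that containment of $a\psi_1(H)$ in it lands inside $H$, and then apply the hypothesis with $I=Ann_R(H)$ and $J=aR$ to force $aH=0$, a contradiction. The only cosmetic differences are that the paper shrinks $K$ to $K\cap H$ while you shrink it to $aH$, and your element chase through $H+(0:_Ma)$ is dispensable, since $a\psi_1(H)\subseteq aH\subseteq H$ already gives $\psi_1(H)\subseteq (H:_MaR)$ directly.
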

\begin{proof}
As $H$ is not a second submodule of $M$,  there exists $r \in R$ and a submodule $K$ of $M$ such that
$rH \not=0$ and $H \not \subseteq K$, but $rH \subseteq K$ by Theorem \ref{t1.1}.
We have $H \not \subseteq K \cap H$ and $rH \subseteq K \cap H$. If $r(H:_MAnn_R(H)) \not \subseteq K \cap H $, then by our definition $H$ is not a $\psi_1$-second submodule of $M$.  So let $r(H:_MAnn_R(H)) \subseteq K \cap H $. Then $r(H:_MAnn_R(H)) \subseteq K \cap H \subseteq H$. Thus $(H:_MAnn_R(H)) \subseteq (H:_Mr)$ and so by assumption,  $r\in Ann_R(H)$. This is a contradiction.
\end{proof}

\begin{cor}\label{c2.6}
Let $M$ be an R-module and $\psi: S(M) \rightarrow S(M) \cup \{\emptyset \}$ be a function.
Let $H$ be a submodule of $M$ such that far all ideals $I$ and $J$ of $R$, $(H:_MI)\subseteq (H:_MJ)$ implies that $J \subseteq I$. Then $H$ is a second submodule of $M$ if and only if $H$ is a
$\psi_1$-second submodule of $M$.
\end{cor}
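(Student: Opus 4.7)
The plan is to observe that the two directions of this biconditional can be handled by different pieces of what has already been established in the paper, so the corollary is essentially a packaging result rather than a new calculation.

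For the forward direction, I would simply invoke the implication chain noted immediately after Definition 2.1, namely $\mathrm{second} \Rightarrow \psi_{n-1}\text{-second} \Rightarrow \psi_n\text{-second} \Rightarrow \psi_\sigma\text{-second}$. Taking the first step with $n=1$ gives that every second submodule is a $\psi_1$-second submodule, with no additional hypothesis required. This direction does not use the assumption on colons at all.

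For the backward direction, I would argue by contrapositive and quote Theorem \ref{t2.5} directly: under the standing hypothesis that $(H:_M I) \subseteq (H:_M J)$ implies $J \subseteq I$ for all ideals $I, J$ of $R$, that theorem says that if $H$ fails to be a second submodule of $M$, then $H$ fails to be a $\psi_1$-second submodule of $M$. Contrapositively, if $H$ is $\psi_1$-second, then $H$ is second. Combined with the forward direction, this yields the equivalence.

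The main (and only) obstacle here is essentially bookkeeping: one must make sure the hypothesis on colon submodules is precisely what Theorem \ref{t2.5} demands and that the non-zero condition travels correctly between the two notions (both second and $\psi_1$-second require $H \neq 0$ by definition, so there is no gap). Because both implications are already done, the proof of the corollary is a one- or two-line appeal, and I would present it as such rather than reproving Theorem \ref{t2.5}.
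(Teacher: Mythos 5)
Your proposal is correct and matches the paper's intent exactly: the paper offers no separate proof for Corollary \ref{c2.6}, since the forward direction is the trivial implication that a second submodule is $\psi$-second for any $\psi$ (as noted after Definition \ref{d2.1}), and the backward direction is precisely the contrapositive of Theorem \ref{t2.5}. Nothing further is needed.
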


An $R$-module $M$ is said to be a \emph{multiplication module} if for every submodule $N$ of $M$, there exists an ideal $I$ of $R$ such that $N=IM$ \cite{Ba81}. It is easy to see that $M$ is a multiplication module if and only if $N=(N:_RM)M$ for each submodule $N$ of $M$.

\begin{thm}\label{t2.7}
Let $M$ be an R-module,  $\phi: S(R) \rightarrow S(R) \cup \{\emptyset \}$, and $\chi: S(M) \rightarrow S(M) \cup \{\emptyset \}$ be  functions such that $\chi(P)=\phi((P:_RM))M$.
\begin{itemize}
  \item [(a)] If $P$ is a $\chi$-prime submodule of $M$ such that $(\chi(P):_RM)\subseteq \phi((P:_RM))$, then $(P:_RM)$ is a $\phi$-prime ideal of $R$.
  \item [(b)] If $M$ is a multiplication $R$-module and $(P:_RM)$ is a $\phi$-prime ideal of $R$, then $P$ is a $\chi$-prime submodule of $M$.
\end{itemize}
\end{thm}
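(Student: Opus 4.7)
The plan is to treat both parts by a common mechanism, the familiar translation between prime ideals of $R$ and prime submodules of $M$, upgraded to handle the ``$\setminus\phi$'' / ``$\setminus\chi$'' exceptions. In each direction one starts with a witness that should force the conclusion, uses the structural hypothesis ($(\chi(P):_RM)\subseteq\phi((P:_RM))$ in (a), multiplication in (b)) to push the witness out of the exceptional set, and then invokes the given primeness assumption. Where a single witness is not enough, one repairs the argument with an additive shift by an element already known to behave.

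For (a), I would take $a,b\in R$ with $ab\in(P:_RM)\setminus\phi((P:_RM))$ and assume $a\notin(P:_RM)$, aiming for $b\in(P:_RM)$. First, $abM\subseteq\chi(P)$ is impossible, since it would give $ab\in(\chi(P):_RM)\subseteq\phi((P:_RM))$, so one can pick $m_1\in M$ with $abm_1\notin\chi(P)$. Then $abm_1=a(bm_1)\in P\setminus\chi(P)$ and $\chi$-primeness combined with $a\notin(P:_RM)$ gives $bm_1\in P$. To upgrade this to $bM\subseteq P$, take an arbitrary $m\in M$ and examine $ab(m+m_1)\in P$: if $ab(m+m_1)\notin\chi(P)$, $\chi$-primeness forces $b(m+m_1)\in P$ and hence $bm\in P$; if $ab(m+m_1)\in\chi(P)$, then $abm=ab(m+m_1)-abm_1\notin\chi(P)$ and $\chi$-primeness applied to $a(bm)\in P\setminus\chi(P)$ again yields $bm\in P$. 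Either way $bM\subseteq P$.

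For (b), take $r\in R$ and $x\in M$ with $rx\in P\setminus\chi(P)$, assume $r\notin(P:_RM)$, and aim for $x\in P$. The multiplication hypothesis provides $Rx=IM$ with $I=(Rx:_RM)$. Each $c\in I$ satisfies $cM\subseteq Rx$, so $rcM\subseteq rRx\subseteq P$ and thus $rc\in(P:_RM)$. If $rI\subseteq\phi((P:_RM))$ held, then $rx\in rIM\subseteq\phi((P:_RM))M=\chi(P)$, contradicting $rx\notin\chi(P)$; hence some $c_0\in I$ satisfies $rc_0\in(P:_RM)\setminus\phi((P:_RM))$, and $\phi$-primeness of $(P:_RM)$ forces $c_0\in(P:_RM)$. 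For a general $c\in I$, either $rc\notin\phi((P:_RM))$ and $\phi$-primeness gives $c\in(P:_RM)$ directly, or $rc\in\phi((P:_RM))$, in which case $r(c+c_0)\in(P:_RM)\setminus\phi((P:_RM))$ (since $rc_0$ lies outside the ideal $\phi((P:_RM))$), so $c+c_0\in(P:_RM)$ and hence $c\in(P:_RM)$. Thus $I\subseteq(P:_RM)$ and $Rx=IM\subseteq(P:_RM)M\subseteq P$, giving $x\in P$. The recurring obstacle in both parts is the same bookkeeping point: primeness activates only outside the exceptional sets $\chi(P)$ and $\phi((P:_RM))$, so the real work is in manufacturing witnesses that avoid those sets, and the additive-shift trick together with the compatibility relation $\chi(P)=\phi((P:_RM))M$ is what makes this possible.
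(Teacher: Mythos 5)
Your proof is correct and follows essentially the same route as the paper: in (a) you use $(\chi(P):_RM)\subseteq\phi((P:_RM))$ to find a witness $m$ with $abm\in P\setminus\chi(P)$, and in (b) you use the multiplication hypothesis and $\chi(P)=\phi((P:_RM))M$ to show $a(Rx:_RM)\not\subseteq\phi((P:_RM))$ before invoking $\phi$-primeness. In fact your additive-shift arguments explicitly justify the element-to-ideal (resp.\ single-witness-to-$bM\subseteq P$) upgrades that the paper's proof leaves implicit in the phrases ``because $P$ is a $\chi$-prime submodule'' and ``by assumption,'' so your write-up is, if anything, more complete.
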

\begin{proof}
(a) Let $ab \in(P:_RM)\setminus  \phi ((P:_RM)$ for some $a, b \in R$.
If $abM\subseteq \chi(P))$, then $ab \in \phi((P:_RM))$, a contradiction. Thus  $abM\not\subseteq \chi(P)$. Therefore,  $aM \subseteq P$ or $bM \subseteq P$ because $P$ is a $\chi$-prime submodule of $M$.

(b) Let $ax \in P \setminus \chi(P)=P\setminus \phi((P:_RM))M$. Then $a(Rx:_RM)M \subseteq P$. If $a(Rx:_RM) \subseteq \phi ((P:_RM))$, then $a(Rx:_RM)M \subseteq \phi ((P:_RM))M$. As $M$ is a multiplication $R$-module, we have $ax \in Rx=(Rx:_RM)M$. Therefore, $ax \in \phi ((P:_RM))M$, a contradiction. Thus $a(Rx:_RM)\not \subseteq \phi ((P:_RM))$ and so by assumption, $a\in (P:_RM)$ or $(Rx:_RM) \subseteq (P:_RM)$ as needed.
\end{proof}

\begin{thm}\label{p92.8}
Let $M$ be an R-module and  $\psi: S(M) \rightarrow S(M) \cup \{\emptyset \}$, $\phi: S(R) \rightarrow S(R) \cup \{\emptyset \}$ be  functions.
\begin{itemize}
  \item [(a)] If $S$ is a $\psi$-second submodule of $M$ such that $Ann_R(\psi(S))\subseteq \phi(Ann_R(S))$, then $Ann_R(S)$ is a $\phi$-prime ideal of $R$.
 \item [(b)] If $M$ is a comultiplication $R$-module, $S$ is a submodule of $M$ such that $\psi(S)=(0:_M\phi(Ann_R(S))$,  and $Ann_R(S)$ is a $\phi$-prime ideal of $R$, then $S$ is a $\psi$-second submodule of $M$.
\end{itemize}
 \end{thm}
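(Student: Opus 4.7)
For part (a), the plan is to unpack the $\phi$-prime definition directly. Take $a, b \in R$ with $ab \in Ann_R(S) \setminus \phi(Ann_R(S))$; as $S$ is nonzero (by $\psi$-secondness), $Ann_R(S)$ is a proper ideal, so it suffices to show $a$ or $b$ annihilates $S$. The natural test pair for the $\psi$-second definition is $r := a$ and $K := (0 :_M b)$: the identity $abS = 0$ forces $aS \subseteq K$. To activate the definition I need $a\psi(S) \not\subseteq K$, equivalently $ab \notin Ann_R(\psi(S))$, and this is precisely what the containment $Ann_R(\psi(S)) \subseteq \phi(Ann_R(S))$ delivers, since $ab \notin \phi(Ann_R(S))$. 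Applying the $\psi$-second property then yields either $S \subseteq (0 :_M b)$ (so $b \in Ann_R(S)$) or $aS = 0$ (so $a \in Ann_R(S)$), finishing (a).

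For part (b), I would check the $\psi$-second definition directly, converting submodule inclusions into ideal inclusions via the comultiplication hypothesis. Given $r \in R$ and $K \le M$ with $rS \subseteq K$ and $r\psi(S) \not\subseteq K$, write $K = (0 :_M I)$ for some ideal $I$. Then $rS \subseteq K$ becomes $rI \subseteq Ann_R(S)$, and $r\psi(S) = r(0 :_M \phi(Ann_R(S))) \not\subseteq (0 :_M I)$ becomes $rI \cdot (0 :_M \phi(Ann_R(S))) \neq 0$; since $\phi(Ann_R(S)) \cdot (0 :_M \phi(Ann_R(S))) = 0$, this forces $rI \not\subseteq \phi(Ann_R(S))$. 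Pick $a \in I$ with $ra \in Ann_R(S) \setminus \phi(Ann_R(S))$; by $\phi$-primeness of $Ann_R(S)$, either $r \in Ann_R(S)$ (so $rS = 0$ and we are done) or $a \in Ann_R(S)$.

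In the remaining case, $r \notin Ann_R(S)$ and $a \in Ann_R(S)$, the plan is to show $I \subseteq Ann_R(S)$, which will complete the proof via the comultiplication identity $S = (0 :_M Ann_R(S))$ yielding $S \subseteq (0 :_M I) = K$. For arbitrary $b \in I$ one has $rb \in Ann_R(S)$; if $rb \notin \phi(Ann_R(S))$ then $\phi$-primeness gives $b \in Ann_R(S)$ directly, and otherwise the additive trick $r(a+b) = ra + rb$ places $r(a+b)$ in $Ann_R(S) \setminus \phi(Ann_R(S))$, whence $\phi$-primeness gives $a+b \in Ann_R(S)$ and subtracting $a$ yields $b \in Ann_R(S)$. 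The main subtlety is simply the submodule/ideal translation in the first step of (b); the additive-combination trick is the standard move for $\phi$-prime arguments when a single witness is not enough.
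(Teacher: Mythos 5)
Your proof is correct and follows essentially the same route as the paper: in (a) the same test pair $r=a$, $K=(0:_Mb)$ with the hypothesis $Ann_R(\psi(S))\subseteq\phi(Ann_R(S))$ guaranteeing $a\psi(S)\not\subseteq(0:_Mb)$, and in (b) the same reduction, via the comultiplication identity $K=(0:_M Ann_R(K))$, to $rAnn_R(K)\subseteq Ann_R(S)$ together with $rAnn_R(K)\not\subseteq\phi(Ann_R(S))$, followed by $\phi$-primeness. The only difference is that you make explicit, via the additive trick, the step ``$rI\subseteq Ann_R(S)$ and $rI\not\subseteq\phi(Ann_R(S))$ imply $r\in Ann_R(S)$ or $I\subseteq Ann_R(S)$,'' which the paper's final line uses without detail.
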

\begin{proof}
(a) Let $ab \in Ann_R(S)\setminus  \phi (Ann_R(S))$ for some $a, b \in R$. Then $ab \psi (S) \not =0$ by assumption.
If $a\psi(S)\subseteq (0:_Mb) $, then $ab\psi(S)=0$, a contradiction. Thus $a\psi(S)\not \subseteq (0:_Mb)$. Therefore, $S \subseteq (0:_Mb)$ or $aS=0$ because $S$ is a $\psi$-second submodule of $M$.

(b) Let $a \in R$ and $K$ be a submodule of $M$ such that $aS \subseteq K$ and $a\psi(S) \not \subseteq K$.
As $aS \subseteq K$, we have $S \subseteq (K:_Ma)$.  It follows that
$$
S \subseteq ((0:_MAnn_R(K)):_Ma)=(0:_MaAnn_R(K)).
$$
This implies that $aAnn_R(K) \subseteq Ann_R((0:_MaAnn_R(K))) \subseteq Ann_R(S)$. Hence, $aAnn_R(K)\subseteq Ann_R(S)$. If $aAnn_R(K)\subseteq \phi( Ann_R(S))$, then
$$
\psi(S)=(0:_M\phi(Ann_R(S)) \subseteq ((0:_MAnn_R(K):_Ma).
$$
As $M$ is a comultiplication $R$-module, we have  $a\psi(S)\subseteq K$, a contradiction. Thus $aAnn_R(K) \not\subseteq  \phi(Ann_R(S))$ and so as $Ann_R(S)$ is a $\phi$-prime ideal of $R$, we conclude that $aS=0$ or
$$
S=(0:_MAnn_R(S)) \subseteq (0:_MAnn_R(K))=K,
$$
as needed.
\end{proof}

The following example shows that the condition ``$M$ is a comultiplication $R$-module" in   Theorem \ref{p92.8} (b) can not be omitted.
\begin{ex}\label{e22.14}
Let $R= \Bbb Z$, $M=\Bbb Z\oplus \Bbb Z$, and $S=2\Bbb Z \oplus 2\Bbb Z$. Clearly,  $M$ is not a comultiplication $R$-module. Suppose that  $\phi: S(R) \rightarrow S(R) \cup \{\emptyset \}$ and $\psi: S(M) \rightarrow S(M) \cup \{\emptyset \}$ be  functions such that $\phi(I)=I$ for each ideal $I$ of $R$ and  $\psi(S)=M$. Then clearly,  $Ann_R(S)=0$ is a  $\phi$-prime ideal of $R$ and  $\psi(S)=M=(0:_M\phi(Ann_R(S))$. But as
 $3S \subseteq 6\Bbb Z \oplus  6\Bbb Z$,  $S \not\subseteq 6\Bbb Z \oplus  6\Bbb Z$,  and $3S\not=0$, we have that $S$ is not a $\psi$-second submodule of $M$.
\end{ex}

\begin{prop}\label{p2.9}
Let $M$ be an R-module,  $\psi: S(M) \rightarrow S(M) \cup \{\emptyset \}$  be a function, and  $N$ be a $\psi$-second
submodule of $M$. Then we have the following statements.
\begin{itemize}
  \item [(a)] If $K$ is a submodule of $M$ with $K \subset N$ and  $\psi_K: S(M/K) \rightarrow S(M/K) \cup \{\emptyset \}$ be a function such that $ \psi_K(N/K)= \psi(N)/K$, then $N/K$ is a $ \psi_K$-second submodule of $M/K$.
  \item [(b)] Let $N$ be a finitely generated submodule of $M$, $S$ be a  multiplicatively closed subset of $R$ with $Ann_R(N) \cap S=\emptyset$, and  $S^{-1}\psi: S(S^{-1}M) \rightarrow S(S^{-1}M) \cup \{\emptyset \}$ be a function such that $(S^{-1}\psi)(S^{-1}N)= S^{-1}\psi(N)$. Then $S^{-1}N$ is a $S^{-1}\psi$-second submodule of $S^{-1}M$.
\end{itemize}
\end{prop}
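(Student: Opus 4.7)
The plan is to verify the defining conditions of $\psi_K$-second and $S^{-1}\psi$-second submodules directly for $N/K$ and $S^{-1}N$ respectively, by lifting each containment and non-containment back to a statement about submodules of $M$, to which the $\psi$-second hypothesis on $N$ can then be applied.

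For (a), I would first note that $N/K \neq 0$ since $K \subsetneq N$. Take $r \in R$ and a submodule of $M/K$, which by the correspondence theorem has the form $L/K$ for some submodule $K \subseteq L \subseteq M$, and suppose $r(N/K) \subseteq L/K$ while $r \psi_K(N/K) = r\psi(N)/K \not\subseteq L/K$. These translate to $rN \subseteq L$ and $r\psi(N) \not\subseteq L$ in $M$, so the $\psi$-second property of $N$ yields $N \subseteq L$ or $rN = 0$; passing back to the quotient gives $N/K \subseteq L/K$ or $r(N/K) = 0$, as required.

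For (b), I would first show $S^{-1}N \neq 0$: since $N$ is finitely generated, $S^{-1}N = 0$ would produce some $s \in S$ with $sN = 0$, contradicting $Ann_R(N) \cap S = \emptyset$. Next, take $r/s \in S^{-1}R$ and a submodule $L = S^{-1}K$ of $S^{-1}M$ satisfying $(r/s)S^{-1}N \subseteq L$ and $(r/s)S^{-1}\psi(N) \not\subseteq L$. Fixing generators $n_1, \ldots, n_k$ of $N$, the first containment produces for each $i$ some $s_i \in S$ with $s_i r n_i \in K$; setting $s' = s_1 \cdots s_k \in S$ gives $s' r N \subseteq K$. The non-containment produces $x \in \psi(N)$ such that $t r x \notin K$ for every $t \in S$, and in particular $s' r \psi(N) \not\subseteq K$. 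Applying the $\psi$-second hypothesis to $s'r$ and $K$ finishes the argument: $N \subseteq K$ forces $S^{-1}N \subseteq L$, while $s'rN = 0$ forces $(r/s)S^{-1}N = 0$.

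The main obstacle will be in part (b), namely clearing denominators uniformly across a finite generating set of $N$ and correctly interpreting the failure of a containment after localization. Finite generation of $N$ is precisely what permits the first manipulation (taking the product $s' = s_1\cdots s_k$), while the second reduces to the elementary fact that $y/1 \in S^{-1}K$ means $ty \in K$ for some $t \in S$, whose negation reads $ty \notin K$ for every $t \in S$.
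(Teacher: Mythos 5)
Your proof is correct and is exactly the routine verification the paper intends, since the authors dismiss Proposition~\ref{p2.9} with ``These are straightforward'': in (a) you pass containments through the quotient using $K\subseteq L$, and in (b) you use finite generation of $N$ to clear denominators and the standard description of submodules of $S^{-1}M$ as $S^{-1}K$, then apply the $\psi$-second property to $s'r$. No gaps; this fills in precisely what the paper leaves to the reader.
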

\begin{proof}
These are straightforward.
\end{proof}

\begin{prop}\label{p2.10}
Let $M$ and $\acute{M}$ be $R$-modules and  $f : M\rightarrow \acute{M}$ be an $R$-monomorphism. Let  $\psi: S(M) \rightarrow S(M) \cup \{\emptyset \}$ and $\acute{\psi}: S(\acute{M}) \rightarrow S(\acute{M}) \cup \{\emptyset \}$  be functions such that $\psi(f^{-1}(\acute{N}))=f^{-1}(\acute{\psi}(\acute {N}))$, for each submodule $\acute {N}$ of $\acute{M}$. If
 $\acute{N}$ is a $\acute{\psi}$-second submodule of $\acute{M}$ such that $\acute{N} \subseteq Im(f)$, then $f^{-1}(\acute{N})$ is a  $\psi$-second submodule of $M$.
\end{prop}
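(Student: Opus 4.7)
Write $N = f^{-1}(\acute{N})$. The first small step is to verify $N \neq 0$: pick a nonzero $\acute{n} \in \acute{N}$, which exists since $\acute{N}$ is $\acute{\psi}$-second; since $\acute{N} \subseteq \operatorname{Im}(f)$, there is $m \in M$ with $f(m) = \acute{n}$, and injectivity of $f$ forces $m \neq 0$, whence $m \in N$. Along the same lines, the hypothesis $\acute{N} \subseteq \operatorname{Im}(f)$ together with injectivity gives the identity $f(N) = \acute{N}$, which will be used repeatedly.

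Next, suppose $r \in R$ and $K \subseteq M$ are such that $rN \subseteq K$ and $r\psi(N) \not\subseteq K$; the goal is $N \subseteq K$ or $rN = 0$. The plan is to transfer this instance of the defining condition across $f$. Set $\acute{K} = f(K)$, which is a submodule of $\acute{M}$. Applying $f$ to $rN \subseteq K$ and using $f(N) = \acute{N}$ yields $r\acute{N} = f(rN) \subseteq f(K) = \acute{K}$.

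The pivotal step is to argue $r\acute{\psi}(\acute{N}) \not\subseteq \acute{K}$, so that the $\acute{\psi}$-second property of $\acute{N}$ applies. Assume for contradiction that $r\acute{\psi}(\acute{N}) \subseteq \acute{K} = f(K)$. By the hypothesis on $\psi$ and $\acute{\psi}$, we have $\psi(N) = f^{-1}(\acute{\psi}(\acute{N}))$, and applying $f$ gives $f(\psi(N)) \subseteq \acute{\psi}(\acute{N})$. Therefore $f(r\psi(N)) = r f(\psi(N)) \subseteq r\acute{\psi}(\acute{N}) \subseteq f(K)$. Because $f$ is injective, $f(r\psi(N)) \subseteq f(K)$ forces $r\psi(N) \subseteq K$, contradicting the assumption. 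Thus $r\acute{\psi}(\acute{N}) \not\subseteq \acute{K}$, and the $\acute{\psi}$-second property gives $\acute{N} \subseteq \acute{K}$ or $r\acute{N} = 0$.

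Finally, pull both alternatives back through $f$. If $r\acute{N} = 0$, then $f(rN) = 0$, so $rN \subseteq \ker f = 0$. If $\acute{N} \subseteq \acute{K} = f(K)$, then for $m \in N$ we have $f(m) \in \acute{N} \subseteq f(K)$, so $f(m) = f(k)$ for some $k \in K$; injectivity gives $m = k \in K$, i.e.\ $N \subseteq K$. Either conclusion is exactly what we need. The only real subtlety is the non-contradiction step, because $f^{-1}$ and scalar multiplication do not in general commute; the hypothesis $\psi(f^{-1}(\acute{N})) = f^{-1}(\acute{\psi}(\acute{N}))$ plus injectivity of $f$ is precisely what lets us bypass this, and that is where I would focus the care in the write-up.
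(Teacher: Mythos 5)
Your proof is correct and follows essentially the same route as the paper's: transfer the instance $rN\subseteq K$, $r\psi(N)\not\subseteq K$ across $f$ to $f(K)$, apply the $\acute{\psi}$-second property of $\acute{N}$, and pull the two alternatives back using injectivity. You merely spell out (correctly) the contrapositive step $r\acute{\psi}(\acute{N})\subseteq f(K)\Rightarrow r\psi(N)\subseteq K$ that the paper compresses into ``by using assumptions.''
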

\begin{proof}
As $\acute{N} \not =0$ and $\acute{N} \subseteq Im(f)$, we have $f^{-1}(\acute{N})\not =0$. Let
$a \in R$ and $K$ be a submodule of $M$ such that $af^{-1}(\acute{N}) \subseteq K$ and $a\psi(f^{-1}(\acute{N})) \not \subseteq K$. Then by using assumptions, $a\acute{N} \subseteq f(K)$ and
 $a\acute{\psi}(\acute{N}) \not \subseteq f(K)$. Thus $a\acute{N}=0$ or $\acute{N}\subseteq f(K)$. This implies that  $af^{-1}(\acute{N})=0$  or $f^{-1}(\acute{N}) \subseteq K$
as needed.
\end{proof}

A proper submodule $N$ of
$M$ is said to be \emph{completely irreducible} if $N=\bigcap _
{i \in I}N_i$, where $ \{ N_i \}_{i \in I}$ is a family of
submodules of $M$, implies that $N=N_i$ for some $i \in I$. It is
easy to see that every submodule of $M$ is an intersection of
completely irreducible submodules of $M$ \cite{FHo06}.

\begin{rem}\label{r22.2}
Let $N$ and $K$ be two submodules of an $R$-module $M$. To prove $N\subseteq K$, it is enough to show that if $L$ is a completely irreducible submodule of $M$ such that $K\subseteq L$, then $N\subseteq L$.
\end{rem}

\begin{prop}\label{p2.11}
Let $M$ be an R-module,  $\psi: S(M) \rightarrow S(M) \cup \{\emptyset \}$  be a function, and  let $N$ be a $\psi_1$-second
submodule of $M$. Then we have the following statements.
\begin{itemize}
\item [(a)] If for $a \in R$, $aN\not=N$, then $(N:_MAnn_R(N))\subseteq (N:_Ma)$.
\item [(b)] If $J$ is an ideal of $R$ such that $Ann_R(N)\subseteq J$ and $JN \not=N$, then
$(N:_MAnn_R(N))=(N:_MJ)$.
\end{itemize}
\end{prop}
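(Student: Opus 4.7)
The plan is to apply the defining property of a $\psi_1$-second submodule directly, with carefully chosen test submodules $K$. Recall $\psi_1(N)=(N:_M\mathrm{Ann}_R(N))$, and note the basic inclusion $N\subseteq\psi_1(N)$, so $aN\subseteq N$ always.

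For part (a), I would take $K=aN$. Then trivially $aN\subseteq K$, so the $\psi_1$-second hypothesis applies whenever $a\psi_1(N)\not\subseteq aN$. I split into two cases. If $a\psi_1(N)\subseteq aN$, then $a\psi_1(N)\subseteq aN\subseteq N$, which directly says $\psi_1(N)\subseteq(N:_M a)$, as desired. Otherwise, the $\psi_1$-second property forces $N\subseteq aN$ or $aN=0$. The first case combined with $aN\subseteq N$ gives $aN=N$, contradicting the hypothesis. In the remaining case $aN=0$, we have $a\in\mathrm{Ann}_R(N)$, and then for any $x\in(N:_M\mathrm{Ann}_R(N))$ we get $ax\in N$, so again $a\psi_1(N)\subseteq N$.

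For part (b), one inclusion is immediate: since $\mathrm{Ann}_R(N)\subseteq J$, any $x$ with $Jx\subseteq N$ satisfies $\mathrm{Ann}_R(N)x\subseteq N$, giving $(N:_M J)\subseteq(N:_M\mathrm{Ann}_R(N))$. For the reverse inclusion it suffices to show $a\psi_1(N)\subseteq N$ for each $a\in J$. Here I would take $K=JN$. Then $aN\subseteq JN=K$. If $a\psi_1(N)\subseteq JN\subseteq N$, we are done. If instead $a\psi_1(N)\not\subseteq JN$, the $\psi_1$-second property yields $N\subseteq JN$ or $aN=0$; the first is ruled out by the hypothesis $JN\neq N$ (together with $JN\subseteq N$), and the second again gives $a\in\mathrm{Ann}_R(N)$ and hence $a\psi_1(N)\subseteq N$. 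Summing over $a\in J$ gives $J\psi_1(N)\subseteq N$, i.e.\ $\psi_1(N)\subseteq(N:_M J)$.

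There is no real obstacle here; the entire argument is a matter of picking the right $K$ to plug into Definition~\ref{d2.1}. The only subtlety is recognizing that once one of the dichotomies degenerates to $aN=0$, the conclusion $a\psi_1(N)\subseteq N$ comes for free from the definition of $\psi_1(N)=(N:_M\mathrm{Ann}_R(N))$, so no additional hypothesis is needed in that branch.
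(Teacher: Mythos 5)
Your proof is correct, but it follows a different route from the paper's. For part (a) the paper does not substitute $K=aN$ into Definition \ref{d2.1}; instead it works with completely irreducible submodules via Remark \ref{r22.2}: from $N\not\subseteq aN$ it picks a completely irreducible $L\supseteq aN$ with $N\not\subseteq L$, uses the $\psi_1$-second property to get $a(N:_M\mathrm{Ann}_R(N))\subseteq \acute{L}\cap L$ for every completely irreducible $\acute{L}\supseteq N$, and then concludes $a(N:_M\mathrm{Ann}_R(N))\subseteq N$ by Remark \ref{r22.2} again; part (b) is then deduced from part (a) (note $JN\neq N$ forces $aN\neq N$ for every $a\in J$, so (a) applies termwise). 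You instead test the definition directly against the submodules $K=aN$ and $K=JN$, disposing of the degenerate branch $aN=0$ by observing that then $a\in\mathrm{Ann}_R(N)$ gives $a(N:_M\mathrm{Ann}_R(N))\subseteq N$ for free; this is essentially the same trick as the implication $(a)\Rightarrow(e)$ in Theorem \ref{t2.13}. Your argument is shorter, more elementary, and avoids the completely irreducible machinery altogether; the paper's version conforms to its recurring pattern of testing containments against completely irreducible submodules, but gains nothing extra for this particular statement. Both proofs use the hypotheses in the same essential places ($\mathrm{Ann}_R(N)\subseteq J$ only for the inclusion $(N:_MJ)\subseteq(N:_M\mathrm{Ann}_R(N))$, and $aN\neq N$, resp.\ $JN\neq N$, to exclude the branch $N\subseteq K$), so I see no gap.
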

\begin{proof}
(a) By Remark \ref{r22.2}, there exists a completely irreducible submodule $L$ of $M$ such that $aN\subseteq L$ and $N \not \subseteq L$.
If $aN=0$, then clearly $(N:_MAnn_R(N))\subseteq (N:_Ma)$. So let  $aN\not=0$. Since $N$ is a $\psi_1$-second
submodule of $M$, we must have $a(N:_MAnn_R(N)) \subseteq L$. Now let $\acute{L}$ be a completely irreducible submodule of $M$ such that $N \subseteq \acute{L}$. Then  $N \not\subseteq \acute{L}\cap L$ and  $aN \subseteq \acute{L}\cap L$. Hence as $N$ is a $\psi_1$-second submodule of $M$, we have $a(N:_MAnn_R(N)) \subseteq \acute{L}\cap L $. Thus $a(N:_MAnn_R(N)) \subseteq \acute{L}$. Therefore, $a(N:_MAnn_R(N))\subseteq N$ by Remark \ref{r22.2}. It follows that $(N:_MAnn_R(N))\subseteq (N:_Ma)$.

(b) This follows from part (a).
\end{proof}

\begin{thm}\label{t2.12}
Let $M$ be an $R$-module,  $\psi: S(M) \rightarrow S(M) \cup \{\emptyset \}$  be a function, and let $a$ be an element of $R$ such that $(0 :_M a) \subseteq  a(0:_MaAnn_R((0:_Ma)))$. If $(0:_Ma)$ is a $\psi_1$-second submodule of $M$, then  $(0:_Ma)$ is a second submodule of $M$.
\end{thm}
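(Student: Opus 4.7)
The plan is to apply the characterization from Theorem \ref{t1.1}: it suffices to show that for every $r\in R$ and submodule $K$ of $M$, if $rN\subseteq K$ (where $N:=(0:_M a)$) then $rN=0$ or $N\subseteq K$. One might hope to invoke Theorem \ref{t2.3} directly with $\psi=\psi_1$, but by definition $Ann_R(N)\psi_1(N)\subseteq N$ always holds, so that route is closed and a more delicate argument is needed.

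First I would unpack $\psi_1(N)$. Setting $I:=Ann_R(N)$, note that $a\in I$ since $aN=0$, and
\[
\psi_1(N) \;=\; (N:_M I) \;=\; \{m\in M : Im\subseteq (0:_M a)\} \;=\; (0:_M aI).
\]
Under this identification the hypothesis $(0:_M a)\subseteq a(0:_M a\,Ann_R((0:_M a)))$ reads $N\subseteq a\psi_1(N)$. The reverse containment $a\psi_1(N)\subseteq I\psi_1(N)\subseteq N$ is automatic because $a\in I$. Thus the first key equation is
\[
N \;=\; a\psi_1(N).
\]

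Next I would verify the Theorem \ref{t1.1}(b) criterion. Take $r\in R$ and a submodule $K$ of $M$ with $rN\subseteq K$, and assume $N\not\subseteq K$; the goal is to force $rN=0$. Since also $rN\subseteq N$, we have $rN\subseteq K\cap N$, while $N\not\subseteq K\cap N$. Applying the $\psi_1$-second hypothesis to the submodule $K\cap N$ in place of $K$: either $r\psi_1(N)\not\subseteq K\cap N$, in which case $\psi_1$-secondness gives $N\subseteq K\cap N$ (contradicting $N\not\subseteq K$) or $rN=0$ (as desired); or $r\psi_1(N)\subseteq K\cap N\subseteq N=(0:_M a)$, which means $ar\psi_1(N)=0$.

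In the remaining case, the equation $N=a\psi_1(N)$ delivers the conclusion immediately:
\[
rN \;=\; r\bigl(a\psi_1(N)\bigr) \;=\; a r\psi_1(N) \;=\; 0.
\]
So in every case either $rN=0$ or $N\subseteq K$, and $N$ is second by Theorem \ref{t1.1}. The main obstacle is pinpointing the trick of intersecting with $N$ to extract the inclusion $r\psi_1(N)\subseteq N$, and recognizing that the hypothesis is exactly what is needed to collapse $N=a\psi_1(N)$ so that $ar\psi_1(N)=0$ forces $rN=0$.
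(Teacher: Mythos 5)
Your proof is correct, and its mechanism differs from the paper's in a small but genuine way. The paper also verifies the criterion of Theorem \ref{t1.1}, but it resolves the hard case by an additive shift: given $tN\subseteq K$ with $t\psi_1(N)\subseteq K$, it considers $t+a$ (noting $(t+a)N=tN$ since $aN=0$); if $(t+a)\psi_1(N)\not\subseteq K$ the $\psi_1$-second hypothesis finishes, and otherwise subtracting gives $a\psi_1(N)\subseteq K$, whence the hypothesis $(0:_Ma)\subseteq a(0:_MaAnn_R((0:_Ma)))=a\psi_1(N)$ yields $N\subseteq K$. You instead apply $\psi_1$-secondness once to $K\cap N$, so the hard case gives $r\psi_1(N)\subseteq N=(0:_Ma)$, hence $ar\psi_1(N)=0$; and you sharpen the hypothesis to the equality $N=a\psi_1(N)$ (the reverse inclusion $a\psi_1(N)\subseteq N$ being automatic since $a\in Ann_R(N)$), which converts this into $rN=ar\psi_1(N)=0$. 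Your identification $\psi_1(N)=(0:_MaAnn_R(N))$, and hence the reading of the hypothesis as $N\subseteq a\psi_1(N)$, is exactly right. What each route buys: the paper's shift trick needs two invocations of $\psi_1$-secondness but never has to observe $a\psi_1(N)\subseteq N$, and it lands on the conclusion $N\subseteq K$; your version isolates the hypothesis as the single clean equation $N=a\psi_1(N)$, uses secondness only once, and lands on $rN=0$. Your opening remark that Theorem \ref{t2.3} cannot apply because $Ann_R(N)\psi_1(N)\subseteq N$ always holds is also a correct and worthwhile observation.
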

\begin{proof}
Let $N:=(0:_Ma)$ be a $\psi_1$-second submodule of $M$. Then  $(0:_Ma)\not=0$. Now let $t \in  R$ and  $K$ be a submodule of $M$ such that
$t(0:_Ma) \subseteq K$.  If $t(N:_MAnn_R(N)) \not \subseteq K$,
then $t(0:_Ma)=0$ or $(0:_Ma) \subseteq K$ since $(0:_Ma)$ is a $\psi_1$-second submodule of $M$. So suppose
that $t(N:_MAnn_R(N)) \subseteq K$. Now we have $(t+a)(0:_Ma) \subseteq K$. If $(t+a)(N:_MAnn_R(N)) \not \subseteq K $, then as $(0:_Ma)$ is a $\psi_1$-second submodule of $M$,  $(t+a)(0:_Ma)=0$ or $(0:_Ma) \subseteq K$ and we are done. So assume that $(t+a)(N:_MAnn_R(N))  \subseteq K $.
Then $t (N:_MAnn_R(N)) \subseteq K$ gives that $a(N:_MAnn_R(N)) \subseteq K$. Hence by assumption, $(0 :_M a) \subseteq K$ and the result follows from Theorem \ref{t1.1}.
\end{proof}

\begin{thm}\label{t2.13}
Let $N$ be a non-zero submodule of an $R$-module $M$ and  $\psi: S(M) \rightarrow S(M) \cup \{\emptyset \}$  be a function.
Then the following are equivalent:
\begin{itemize}
\item [(a)] $N$ is a $\psi$-second submodule of $M$;
\item [(b)] for completely irreducible submodule $L$ of $M$ with $N \not \subseteq L$, we have $(L:_RN)=Ann_R(N) \cup (L:_R\psi(N))$;
\item [(c)] for completely irreducible submodule $L$ of $M$ with $N \not \subseteq L$, we have $(L:_RN)=Ann_R(N)$ or $(L:_RN)=(L:_R\psi(N))$;
\item [(d)] for any ideal $I$ of $R$ and any submodule $K$ of $M$, if $IN \subseteq K$ and $I\psi(N) \not \subseteq K$,
then $IN=0$ or $N \subseteq K$.
\item [(e)] for each $a \in R$ with $a\psi(N) \not \subseteq aN$, we have $aN=N$ or $aN=0$.
\end{itemize}
\end{thm}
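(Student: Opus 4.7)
The plan is to close the cycle $(a) \Rightarrow (b) \Rightarrow (c) \Rightarrow (a)$ and then to verify the side equivalences $(a) \Leftrightarrow (d)$ and $(a) \Leftrightarrow (e)$ separately. Throughout, the workhorse is Remark~\ref{r22.2}, which reduces any submodule containment to a check against completely irreducible witnesses.

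The easy equivalences first. For $(a) \Leftrightarrow (e)$: the forward direction specializes $K = aN$ in the definition of $\psi$-second, so that $a\psi(N) \not\subseteq aN$ forces $aN = 0$ or $N \subseteq aN$ (and hence $aN = N$). Conversely, $rN \subseteq K$ combined with $r\psi(N) \not\subseteq K$ forces $r\psi(N) \not\subseteq rN$, so (e) gives $rN = 0$ or $rN = N \subseteq K$. For $(a) \Leftrightarrow (d)$, the direction $(d) \Rightarrow (a)$ is the principal-ideal specialization $I = Rr$; for $(a) \Rightarrow (d)$, given $IN \subseteq K$ with $I\psi(N) \not\subseteq K$, I would pick $r_0 \in I$ with $r_0\psi(N) \not\subseteq K$ and apply $(a)$ either to $r$ directly (when $r\psi(N) \not\subseteq K$) or to $r + r_0$ (when $r\psi(N) \subseteq K$), combining with $(a)$ for $r_0$ to conclude $rN = 0$; this is precisely the additive trick used in the proof of Theorem~\ref{t2.3}.

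For $(a) \Rightarrow (b)$: fix a completely irreducible $L$ with $N \not\subseteq L$ and $r \in (L:_RN)$. If $r\psi(N) \subseteq L$, then $r \in (L:_R\psi(N))$; otherwise $(a)$ applied to $rN \subseteq L$ and $r\psi(N) \not\subseteq L$ gives $rN = 0$ (since $N \not\subseteq L$ is excluded), i.e.\ $r \in Ann_R(N)$. The reverse inclusion $Ann_R(N) \cup (L:_R\psi(N)) \subseteq (L:_R N)$ is automatic, using $N \subseteq \psi(N)$ (which holds for the standard $\psi$'s considered in the paper). The step $(b) \Rightarrow (c)$ is then the classical fact that an ideal cannot be the set-theoretic union of two of its proper sub-ideals, so the disjunction in (c) follows.

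The heart of the argument is $(c) \Rightarrow (a)$. Assuming $rN \subseteq K$, $r\psi(N) \not\subseteq K$, and, for contradiction, both $N \not\subseteq K$ and $rN \neq 0$, I apply (c) to every completely irreducible $L \supseteq K$ with $N \not\subseteq L$: since $r \in (L:_RN)$ and $rN \neq 0$ rules out $(L:_RN) = Ann_R(N)$, we obtain $(L:_RN) = (L:_R\psi(N))$, hence $r\psi(N) \subseteq L$. Intersecting produces $r\psi(N) \subseteq K^* := \bigcap\{L \text{ c.i.} : K \subseteq L,\ N \not\subseteq L\}$, and Remark~\ref{r22.2} yields the decomposition $K = K^* \cap (K+N)$. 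The remaining task is to show $r\psi(N) \subseteq K + N$, for then $r\psi(N) \subseteq K^* \cap (K+N) = K$ contradicts the hypothesis. The delicate point I anticipate is exactly this: given any $y \in \psi(N)$ with $ry \notin K$, I need to exhibit a completely irreducible $L \supseteq K$ that simultaneously avoids $ry$ and fails to contain $N$, so that (c) applies directly and furnishes the contradiction. The plan is to fix $z \in N \setminus K$, use Remark~\ref{r22.2} on $K$ to produce a witness avoiding $ry$, and then show that if every such witness contained $z$ we would force $ry \in K + Rz$ for every $z \in N \setminus K$, hence $ry \in K + N$; running this over all $y$ then collapses into the required inclusion $r\psi(N) \subseteq K + N$. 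This combining-of-witnesses step is the one I expect to take the most care to nail down.
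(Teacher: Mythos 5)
Your fully worked-out parts are fine: the treatment of (a)$\Leftrightarrow$(e) is the paper's argument; (a)$\Rightarrow$(b)$\Rightarrow$(c) matches the paper, and your normalization $N\subseteq\psi(N)$ is the correct one to make the reverse inclusion in (b) work (the paper's ``we may assume $\psi(N)\subseteq N$'' is stated in the wrong direction); and your direct proof of (a)$\Rightarrow$(d), fixing $r_0\in I$ with $r_0\psi(N)\not\subseteq K$ and applying (a) to $r$ or to $r+r_0$, is correct and cleaner than the paper, which instead obtains (d) from (c).

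The genuine gap is exactly the step you flagged in (c)$\Rightarrow$(a), and it cannot be repaired, because (c) does not imply (a). Your reduction $K=K^{*}\cap(K+N)$ is correct, but the remaining inclusion $r\psi(N)\subseteq K+N$ --- equivalently, the existence of a completely irreducible $L\supseteq K$ avoiding $ry$ with $N\not\subseteq L$ --- can genuinely fail. The paper's own Example \ref{e2.14} already witnesses this: take $R=M=\Bbb Z_6\times\Bbb Z_6$, $N=\Bbb Z_6\times 0$, $\psi=\psi_M$, so $\psi(N)=M$ and $Ann_R(N)=0\times\Bbb Z_6$. The only completely irreducible submodules of $M$ not containing $N$ are $\bar{2}\Bbb Z_6\times\Bbb Z_6$ and $\bar{3}\Bbb Z_6\times\Bbb Z_6$, and for both of them $(L:_RN)=(L:_RM)=(L:_R\psi(N))$, so (b) and (c) hold. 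Yet (a) fails: with $r=(\bar{2},\bar{1})$ and $K=\bar{2}\Bbb Z_6\times\bar{3}\Bbb Z_6$ one has $rN\subseteq K$, $r\psi(N)=\bar{2}\Bbb Z_6\times\Bbb Z_6\not\subseteq K$, $N\not\subseteq K$, and $rN\neq 0$. Concretely, for $y=(0,\bar{1})\in\psi(N)$ the element $ry=(0,\bar{1})\notin K$ is avoided by exactly one completely irreducible submodule containing $K$, namely $\Bbb Z_6\times\bar{3}\Bbb Z_6$, and that submodule contains $N$; accordingly $r\psi(N)\not\subseteq K+N=\Bbb Z_6\times\bar{3}\Bbb Z_6$, so no combination of witnesses can produce the $L$ you need. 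Note that the paper's own proof stumbles at the same point: in its (c)$\Rightarrow$(d) step it applies (c) to every completely irreducible $L\supseteq K$ without ensuring $N\not\subseteq L$, which is precisely the case occurring above. What actually survives of the theorem is (a)$\Leftrightarrow$(d)$\Leftrightarrow$(e) together with the implications (a)$\Rightarrow$(b)$\Rightarrow$(c); recovering (a) from (b) or (c) requires an extra hypothesis, not a more careful choice of completely irreducible witnesses.
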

\begin{proof}
$(a)\Rightarrow (b)$. Let for a completely irreducible submodule $L$ of $M$ with $N \not \subseteq L$, we have $a \in(L:_RN)\setminus (L:_R\psi(N))$. Then $a\psi(N)\not \subseteq  L$.
Since $N$ is a $\psi$-second submodule of $M$, we have $a \in Ann_R(N)$. As we may assume that
$\psi(N) \subseteq N$, the other inclusion always holds.

$(b)\Rightarrow (c)$. This follows from the fact that if a subgroup is a union of two subgroups, it is equal to one of them.

$(c)\Rightarrow (d)$.
Let $I$ be an ideal of $R$ and $K$ be a submodule of $M$ such that $IN\subseteq K$ and $I\psi(N) \not \subseteq K$.
Suppose $I \not \subseteq Ann_R(N)$ and $N\not\subseteq K$. We show that $I\psi(N) \subseteq K$. Let $a \in I$ and
$L$ is a completely irreducible submodule of $M$ with $K \subseteq L$. First let $a\not \in Ann_R(N)$. Then, since $aN \subseteq L$, we have $(L:_RN)\not=Ann_R(N)$.
Hence by our assumption $(L:_RN)=(L:_R\psi(N))$. So $a\psi(N) \subseteq L$. Now assume that
$a \in I \cap Ann_R(N)$. Let $u \in I \setminus Ann_R(N)$. Then $a + u \in I \setminus Ann_R(N)$. So by the
first case, for each completely irreducible submodule $L$ of $M$ with $K \subseteq L$ we have $u\psi(N) \subseteq L$ and $(u+a)\psi(N) \subseteq L$. This gives that
$a\psi(N) \subseteq L$. Thus in any case $a\psi(N) \subseteq L$. Thus $I\psi(N) \subseteq L$. Therefore $I\psi(N) \subseteq K$ by Remark \ref{r22.2}.

$(d)\Rightarrow (a)$.
This is clear.

$(a)\Rightarrow (e)$.
Let $a \in R$ such that $a\psi(N) \not \subseteq aN$. Then $aN \subseteq aN$ implies that $N \subseteq aN$ or $aN=0$ by part (a). Thus $N =aN$ or $aN=0$, as requested.

$(e)\Rightarrow (a)$.
Let $a \in R$ and $K$ be a submodule of $M$ such that $aN \subseteq K$ and $a\psi(N) \not \subseteq K$. If $a\psi(N) \subseteq aN$, then $aN \subseteq K$ implies that $a\psi(N) \subseteq K$, a contradiction. Thus by part (e), $aN=N$ or $aN=0$. Therefore,  $N \subseteq K$ or $aN=0$, as needed.
\end{proof}

\begin{ex}\label{e2.114}
Let $N$ be a non-zero submodule of an $R$-module $M$ and  let $\psi: S(M) \rightarrow S(M) \cup \{\emptyset \}$  be a function.
If $\psi(N)=N$, then  $N$ is a $\psi$-second submodule of $M$ by Theorem \ref{t2.13} $(e)\Rightarrow (a)$.
\end{ex}

Let $R_1$ and $R_2$ be two commutative rings with identity. Let $M_1$ and $M_2$ be $R_1$ and
$R_2$-module, respectively and put $R = R_1 \times R_2$. Then $M = M_1 \times M_2$ is an $R$-module
and each submodule of $M$ is of the form $N = N_1 \times N_2$ for some submodules $N_1$ of
$M_1$ and $N_2$ of $M_2$. Suppose that $\psi^i: S(M_i) \rightarrow S(M_i) \cup \{\emptyset \}$  be a function for $i=1, 2$. The second submodules of the $R = R_1 \times R_2$-module $M = M_1 \times M_2$ are in the form $S_1 \times 0$ or $0\times S_2$, where $S_1$ is a second
submodule of $M_1$ and $S_2$ is a second submodule of $M_2$ \cite[2.23]{AF1112}.  The following example, shows that this is not true for correspondence
$\psi^1 \times \psi^2$-second submodules in general.

\begin{ex}\label{e2.14}
Let $R_1 = R_2 = M_1 = M_2 =S_1= \Bbb Z_6$. Then clearly,  $S_1$  is
a weak second submodule of $M_1$. However,
$
(\bar{2},\bar{1})(\Bbb Z_6 \times 0) \subseteq \bar{2}\Bbb Z_6 \times \bar{3}\Bbb Z_6
$
and $(\bar{2},\bar{1})(\Bbb Z_6 \times \Bbb Z_6) \not \subseteq \bar{2}\Bbb Z_6 \times \bar{3}\Bbb Z_6$. But
$(\bar{2}, \bar{1})(\Bbb Z_6 \times 0) =\bar{2}\Bbb Z_6\times 0 \not =0\times 0$, and $\Bbb Z_6 \times 0 \not \subseteq \bar{2}\Bbb Z_6 \times \bar{3}\Bbb Z_6$.
Therefore,  $S_1 \times 0$ is not a weak second submodule of $ M_1 \times M_2$.
\end{ex}

\begin{thm}\label{t2.133}
Let $R = R_1 \times R_2$ be a ring and $M = M_1 \times M_2$
be an $R$-module, where $M_1$ is an $R_1$-module and $M_2$ is an $R_2$-module. Suppose that $\psi^i: S(M_i) \rightarrow S(M_i) \cup \{\emptyset \}$  be a function for $i=1, 2$. Then  $S_1 \times 0$ is a $\psi^1 \times \psi^2$-second submodule of $M$,  where $S_1$ is a  $\psi^1$-second submodule of $M_1$ and  $\psi^2(0)=0$.
\end{thm}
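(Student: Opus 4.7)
The plan is to verify Definition \ref{d2.1} for $N := S_1 \times 0$ by reducing every condition to its first coordinate. The key observations are that every submodule of $M = M_1 \times M_2$ factors as $K_1 \times K_2$ with $K_i$ a submodule of $M_i$, and that the hypothesis $\psi^2(0) = 0$ forces
\[
(\psi^1 \times \psi^2)(S_1 \times 0) \;=\; \psi^1(S_1) \times \psi^2(0) \;=\; \psi^1(S_1) \times 0.
\]
Without this vanishing, the second coordinate of $(\psi^1 \times \psi^2)(N)$ would be nontrivial and, as Example \ref{e2.14} indicates, the statement could fail.

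First I would record that $S_1 \neq 0$ implies $S_1 \times 0 \neq 0$, so the non-triviality requirement of Definition \ref{d2.1} holds. Next, I would take $r = (r_1, r_2) \in R$ and a submodule $K = K_1 \times K_2$ of $M$ satisfying $r(S_1 \times 0) \subseteq K$ and $r(\psi^1 \times \psi^2)(S_1 \times 0) \not\subseteq K$. Computing componentwise, these translate respectively to $r_1 S_1 \subseteq K_1$ and $r_1 \psi^1(S_1) \not\subseteq K_1$, while the second coordinate imposes no condition because $0 \subseteq K_2$ and $0 \not\subseteq K_2$ is impossible.

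Now I would invoke the hypothesis that $S_1$ is a $\psi^1$-second submodule of $M_1$, applying Definition \ref{d2.1} in $M_1$ to conclude either $S_1 \subseteq K_1$ or $r_1 S_1 = 0$. Lifting these two alternatives back to $M$ yields $S_1 \times 0 \subseteq K_1 \times K_2 = K$ or $r(S_1 \times 0) = r_1 S_1 \times 0 = 0$, which is exactly the conclusion demanded by Definition \ref{d2.1} for $S_1 \times 0$.

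There is no serious obstacle; the whole argument is a coordinatewise unpacking of definitions. The only point that deserves care is the explicit use of $\psi^2(0) = 0$ in the opening computation, since this is what collapses the second factor of $(\psi^1 \times \psi^2)(N)$ to zero and permits the reduction to a single-factor statement about $S_1$ inside $M_1$.
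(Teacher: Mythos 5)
Your proof is correct and follows essentially the same route as the paper: compute $(\psi^1\times\psi^2)(S_1\times 0)=\psi^1(S_1)\times 0$ using $\psi^2(0)=0$, read off the hypotheses coordinatewise as $r_1S_1\subseteq K_1$ and $r_1\psi^1(S_1)\not\subseteq K_1$, apply the $\psi^1$-second property of $S_1$, and lift the conclusion back to $M_1\times M_2$. Your explicit remarks on the nonvanishing of $S_1\times 0$ and on why the second coordinate imposes no condition are small additions the paper leaves implicit, but the argument is the same.
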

\begin{proof}
Let $(r_1, r_2) \in R$ and $K_1 \times K_2$ be a submodule of $M$
such that $(r_1, r_2)(S_1 \times 0) \subseteq K_1 \times K_2$ and
$$
(r_1, r_2) ((\psi^1 \times \psi^2)(S_1 \times 0))=r_1\psi^1(S_1)\times r_2\psi^2(0)=r_1\psi^1(S_1)\times 0\not\subseteq K_1 \times K_2
$$
Then
 $r_1S_1 \subseteq K_1 $ and $r_1 \psi^1(S_1) \not\subseteq K_1$.
Hence,
$r_1S_1=0$ or $S_1 \subseteq K_1 $ since $S_1$ is a  $\psi^1$-second submodule of $M_1$.
Therefore, $(r_1, r_2)(S_1 \times 0)=0 \times 0$ or $S_1 \times 0 \subseteq K_1 \times K_2$, as requested.
 \end{proof}

 \bibliographystyle{amsplain}

\end{document}